\newtheorem{Thm}{Theorem}[section]
\newtheorem{Lemme}[Thm]{Lemma}
\newtheorem{Prop}[Thm]{Proposition}
\newtheorem{remark}[Thm]{Remark}
\newcommand\R{\mathbb{R}}
\newcommand\T{\mathbb{T}}
\newcommand\Z{\mathbb{Z}}
\newcommand\Ng[1]{\left| #1 \right|_{\gamma}}
\newcommand\Ngs[2]{\left| #1 \right|_{\gamma^{#2}}}
\newcommand\Hg{H_{\gamma}(\R)}
\newcommand\Hgs[1]{H_{\gamma^{#1}}(\R)}
\newcommand\Nb[1]{\left| #1 \right|_{\beta}}
\newcommand\Hb{H_{\beta}(\R)}
\newcommand\abs[1]{\left|#1\right|}
\newcommand\Normp[2]{\left\lVert#1\right\rVert_{L^#2}}
\newenvironment{changemargin}[2]{\begin{list}{}{%
\setlength{\topsep}{0pt}%
\setlength{\leftmargin}{0pt}%
\setlength{\rightmargin}{0pt}%
\setlength{\listparindent}{\parindent}%
\setlength{\itemindent}{\parindent}%
\setlength{\parsep}{0pt plus 1pt}%
\addtolength{\leftmargin}{#1}%
\addtolength{\rightmargin}{#2}%
}\item }{\end{list}}
\title{KdV}
\author{Pierre GARNIER}
\date{2013}
\begin{document}

\title{\Large {\bf Damping to prevent the blow-up of the Korteweg-de Vries equation} 
} 
 
\author{ {Pierre Garnier\footnote{pierre.garnier@u-picardie.fr}}\\ 
{\small Laboratoire Ami\'enois de Math\'ematique Fondamentale et Appliqu\'ee,} \\  {\small CNRS UMR 7352, Universit\'e 
de Picardie Jules Verne,}\\
{\small 80069 Amiens, France.}
}

\date{ }
\maketitle

\begin{minipage}[t]{14cm}
	{\footnotesize {\bf Abstract.} We study the behavior of the solution of a generalized damped KdV equation $u_t + u_x + u_{xxx} + u^p u_x + \mathscr{L}_{\gamma}(u)= 0$. We first state results on the local well-posedness. Then when $p \geq 4$, conditions on $\mathscr{L}_{\gamma}$ are given to prevent the blow-up of the solution. Finally, we numerically build such sequences of damping.} 
\end{minipage} \\ 
\\      
     
\begin{minipage}[t]{10.5cm}{\footnotesize {\bf Keywords.} 
KdV equation, dispersion, dissipation, blow-up.}
\end{minipage}\\

{\footnotesize {\bf MS Codes.} 35B44, 35Q53, 76B03, 76B15.}

\section*{Introduction}

	The Korteweg-de Vries (KdV) equation is a model of one-way propagation of small amplitude, long wave \cite{KdV}. It is written as
\[ u_t + u_x + u_{xxx} + u u_x = 0. \]

In \cite{BonDouKar}, Bona et al. consider the initial- and periodic-boundary-value problem for the generalized Korteweg-de-Vries equation 
\[ u_t + u_{xxx} + u^p u_x = 0 \]
and study the effect of a dissipative term on the global well-posedness of the solution. Actually, they consider two different dissipative terms, a Burgers-type one $-\delta u_{xx}$ and a zeroth-order term $\sigma u$. For both these terms, they show that for $p \geq 4$, there exist critical values $\delta_c$ and $\sigma_c$ such that if $\delta>\delta_c$ or $\sigma>\sigma_c$ the solution is globally well-defined. However, the solution blows-up when the damping is too weak as for the KdV equation \cite{MarMer}.The literature is full of work concerning the dampen KdV equation with $p=1$ \cite{AmBonaSch,CabRosa,CheSad,CheSad2,Ghi,Ghi2,Goub,GoubRosa}, but few are concerning more general nonlinearities.

In our paper, we consider a more general damping term denoted by $\mathscr{L}_{\gamma}(u)$. Our purpose is to find similar results as above, both theorically and numerically. So the KdV equation becomes a damped KdV (dKdV) equation and is written
\[ u_t + u_x + u_{xxx} + u^p u_x + \mathscr{L}_{\gamma}(u)= 0. \]
The damping operator $\mathscr{L}_{\gamma}(u)$ works on the frequencies. It is defined by its Fourier symbol
\[ \widehat{\mathscr{L}_{\gamma}(u)}(\xi) :=  \gamma(\xi) \hat{u}(\xi) .\]
Here $\hat{u}$ is the Fourier transform of $u$ and $\gamma$ a strictly positive function chosen such that  
\[ \int_\R  u(x) \mathscr{L}_\gamma (u) d\mu(x) = \int_{\R} \gamma(\xi) |\hat{u}(\xi)|^2 d\xi \geq 0. \]
We notice than the two cases studied in \cite{BonDouKar} are present with this damping by taking $\gamma(\xi) = \delta \xi^2$ and $\gamma(\xi) = \sigma$ respectively.

The KdV equation has an infinite number of invariants such that the $L^2$-norm. But, for the dKdV equation, the $L^2$-norm decreases. Indeed, for all $t \in \R$, 
\[ \frac{d}{dt} \Normp{u}{2}^2 = - \Ng{u}^2 \]
where the natural space of study is 
\[ \Hg := \left\{ u \in L^2(\R) \mbox{~s.t.~} \int_{\R} \gamma(\xi) \abs{\hat{u}(\xi)}^2 d\xi < +\infty \right\} \]
and the associated norm is
\[ \Ng{u} := \sqrt{\int_{\R} \gamma(\xi) \abs{\hat{u}(\xi)}^2 d\xi}. \]

An other property of the KdV equation is that the solution can blow-up as soon as $p \geq 4$ . The blow-up is caracterized by $\lim\limits_{t \rightarrow T} \left\lVert u \right\rVert_{H^1} = +\infty$.

In this paper, we first establish the local well-posedness of the dKdV equation. Then we study the global well-posedness. More precisely, we focus on the behavior of the $H^1$-norm with respect to $p$ and we obtain conditions on $\gamma$ so there is no blow-up. Finally, we illustrate the results using some numerical simulations. We first find a constant damping ($\gamma(\xi)=$constant) such that there is no blow-up and then the damping is weaken in such a way $\lim\limits_{|\xi| \rightarrow + \infty} \gamma(\xi) = 0$.

\section{Preliminary results}

	Some results of injection concerning the space $\Hg$ are given.

\begin{Prop}\label{prop1.1}
	Assume $\int_{\R} \frac{1}{\gamma(\xi)} < +\infty$ then there exists a constant $C>0$ such that $\| u \|_{\infty} \leq C \Ng{u}$, i.e., the injection $\Hg \hookrightarrow L^{\infty}(\R)$ is continuous.
\end{Prop}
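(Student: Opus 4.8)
The plan is to bound the $L^\infty$-norm by controlling $\hat u$ in $L^1(\R)$ and then invoking the standard fact that $\|u\|_\infty \le \|\hat u\|_{L^1}$ (up to the normalization constant of the Fourier transform). So the heart of the matter is to show $\hat u \in L^1(\R)$ with a quantitative bound in terms of $\Ng u$.

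First I would write, for $u \in \Hg$,
\[
\int_\R |\hat u(\xi)|\, d\xi = \int_\R \frac{1}{\sqrt{\gamma(\xi)}} \cdot \sqrt{\gamma(\xi)}\,|\hat u(\xi)|\, d\xi,
\]
and apply the Cauchy--Schwarz inequality to the product, splitting the weight $1 = \gamma(\xi)^{-1/2}\gamma(\xi)^{1/2}$. This gives
\[
\int_\R |\hat u(\xi)|\, d\xi \le \left( \int_\R \frac{d\xi}{\gamma(\xi)} \right)^{1/2} \left( \int_\R \gamma(\xi) |\hat u(\xi)|^2\, d\xi \right)^{1/2} = \left( \int_\R \frac{d\xi}{\gamma(\xi)} \right)^{1/2} \Ng u,
\]
which is finite precisely by the hypothesis $\int_\R \gamma^{-1} < +\infty$ (here one uses $\gamma > 0$ so the integrand is well-defined). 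Then, from $u(x) = \frac{1}{2\pi}\int_\R \hat u(\xi) e^{ix\xi}\, d\xi$ (or whichever normalization the paper adopts), one gets $\|u\|_\infty \le \frac{1}{2\pi}\|\hat u\|_{L^1} \le C \Ng u$ with $C = \frac{1}{2\pi}\big(\int_\R \gamma^{-1}\big)^{1/2}$. I would also remark that $u \in \Hg \subset L^2$ together with $\hat u \in L^1$ makes the Fourier inversion formula hold pointwise (with $u$ agreeing a.e. with a bounded continuous function), which is what legitimizes the last step.

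There is no serious obstacle here: the argument is a one-line Cauchy--Schwarz split followed by Hausdorff--Young (the easy $L^1 \to L^\infty$ endpoint). The only point requiring a little care is the measure-theoretic subtlety that $\Hg$ a priori consists of $L^2$ functions, so the inequality $\|u\|_\infty \le C\Ng u$ should be understood as: $u$ has a representative (necessarily continuous, being the Fourier transform of an $L^1$ function) satisfying the bound. Continuity of the injection $\Hg \hookrightarrow L^\infty(\R)$ is then immediate from linearity and the explicit constant.
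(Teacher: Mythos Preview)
Your proof is correct and follows essentially the same route as the paper: split $|\hat u(\xi)| = \gamma(\xi)^{-1/2}\cdot \gamma(\xi)^{1/2}|\hat u(\xi)|$, apply Cauchy--Schwarz to bound $\|\hat u\|_{L^1}$ by $\big(\int_{\R}\gamma^{-1}\big)^{1/2}\Ng{u}$, and conclude via Fourier inversion. The only differences are cosmetic (your Fourier normalization carries a $1/(2\pi)$ the paper's does not) and your added remark on continuous representatives, which is a welcome clarification but not in the original.
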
	

\begin{proof}
	Let $u \in H_{\gamma}(\R)$. We notice that 
	\[ u(x)=\int_{\R} \hat{u}(\xi) e^{i\xi x} d\xi. \]
	Then
	\[ |u(x)| \leq \int_{\R} |\hat{u}(\xi)| =  \int_{\R} \frac{1}{\sqrt{\gamma(\xi)}} \sqrt{\gamma(\xi)} |\hat{u}(\xi)| .\]
	We assumed that $\gamma(\xi) > 0$. Hence, the Cauchy-Schwarz inequality involves for all $x \in \R$ :
	\[ |u(x)| \leq \left( \int_{\R} \frac{1}{\gamma(\xi)} \right)^{\frac{1}{2}} \left( \int_{\R} \gamma(\xi) |\hat{u}(\xi)|^2 \right)^{\frac{1}{2}} = \left( \int_{\R} \frac{1}{\gamma(\xi)} \right)^{\frac{1}{2}} \Ng{u}. \]
\end{proof}


\begin{Prop}
	Let $\gamma$ and $\beta$ be such that for all $\xi \in \R$, $\gamma(\xi)>\beta(\xi)$. We define 
	\[ \rho(N) := \max\limits_{\xi \geq N} \frac{\beta(\xi)}{\gamma(\xi)}. \]
	The continuous injection $\Hg \hookrightarrow \Hb$ is compact if and only if $\lim\limits_{N \rightarrow +\infty}\rho(N)=0$.
\end{Prop}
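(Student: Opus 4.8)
The plan is to write the canonical map $J\colon\Hg\hookrightarrow\Hb$ as an operator-norm limit of spectral truncations whose tails are controlled precisely by $\rho$, and to read compactness off from that. Since $\gamma(\xi)>\beta(\xi)>0$ pointwise, $\Nb{u}\le\Ng{u}$ for every $u$, so $J$ is bounded with $\|J\|\le 1$: this is the (elementary) continuity statement.

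For each $N>0$ introduce the spectral truncation $P_N\colon\Hg\to\Hb$ defined on the Fourier side by keeping $\hat u(\xi)$ on $\{\abs\xi\le N\}$ and setting it to $0$ on $\{\abs\xi>N\}$. Then, for every $u\in\Hg$,
\[ \Nb{u-P_N u}^2=\int_{\abs\xi>N}\beta(\xi)\abs{\hat u(\xi)}^2\,d\xi\le\rho(N)\int_{\abs\xi>N}\gamma(\xi)\abs{\hat u(\xi)}^2\,d\xi\le\rho(N)\,\Ng{u}^2, \]
so that $\|J-P_N\|_{\Hg\to\Hb}^2\le\rho(N)$ (and one checks this is sharp by testing on functions whose spectrum concentrates where the maximum in the definition of $\rho(N)$ is almost attained).

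For the implication $\rho(N)\to0\Rightarrow J$ compact, note first that each $P_N$ is a compact operator: its range is made of functions spectrally supported in the bounded band $\{\abs\xi\le N\}$, on which the $\Hg$- and $\Hb$-norms are equivalent and the band truncation is a compact operator (indeed of finite rank when the frequency variable is discrete). A norm-limit of compact operators is compact, hence so is $J$. Spelled out, this is the usual diagonal argument: for a bounded sequence $(u_n)$, $\Ng{u_n}\le M$, reflexivity of $\Hg$ gives a subsequence $u_n\rightharpoonup u$; for fixed $N$ one estimates $\Nb{u_n-u}^2\le\Nb{P_N(u_n-u)}^2+4M^2\rho(N)$, chooses $N$ to make the second term $<\varepsilon/2$, and then invokes compactness of $P_N$ to make the first term $<\varepsilon/2$ once $n$ is large. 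For the converse I argue by contraposition. If $\rho(N)\not\to0$ there are $\varepsilon_0>0$ and points $\xi_k$ with $\abs{\xi_k}\to\infty$ and $\beta(\xi_k)/\gamma(\xi_k)\ge\varepsilon_0$; choose pairwise disjoint neighbourhoods $I_k\ni\xi_k$, shrunk enough that $\int_{I_k}\beta\ge\tfrac12\varepsilon_0\int_{I_k}\gamma$, and put $u_k:=\mathcal{F}^{-1}(\mathbf{1}_{I_k})\big/\big(\int_{I_k}\gamma\big)^{1/2}$, so that $\Ng{u_k}=1$ and $\Nb{u_k}^2\ge\varepsilon_0/2$. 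The $\hat u_k$ have disjoint supports, hence are orthogonal in $\Hb$, so $\Nb{u_k-u_l}^2=\Nb{u_k}^2+\Nb{u_l}^2\ge\varepsilon_0$ for $k\ne l$; thus $(u_k)$ is bounded in $\Hg$ but admits no $\Hb$-Cauchy subsequence, and $J$ is not compact.

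The one genuinely substantial point is the compactness of the low-frequency truncation $P_N$ used in the first implication: everything else is bookkeeping around the single pointwise inequality $\beta\le\rho(N)\gamma$ on $\{\abs\xi>N\}$, whereas closing the argument requires knowing that on the bounded frequency band the embedding is compact, i.e. that weak convergence in $\Hg$ upgrades to $\Hb$-norm convergence after spectral truncation.
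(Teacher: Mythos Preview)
Your approach mirrors the paper's: continuity from $\beta<\gamma$, then approximate the inclusion by spectral truncations $P_N$ (the paper's $I_N$) with the tail bound $\Nb{(J-P_N)u}^2\le\rho(N)\Ng{u}^2$, and conclude compactness as a limit of compact (or finite-rank) operators. For the necessary direction your construction of a bounded sequence with pairwise disjoint spectral supports is actually more careful than the paper's one-line claim that $\rho(N)\not\to0$ forces equivalence of the two norms; that claim is not quite right, since $\beta/\gamma$ could oscillate and stay large only on a thin set of frequencies.

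There is, however, a real gap in the sufficiency direction, and it is exactly the point you flag in your last paragraph: on $\R$ the truncation $P_N$ is \emph{not} compact, let alone finite rank. On the band $\{|\xi|\le N\}$ the weights $\gamma$ and $\beta$ give equivalent norms, so $P_N$ acts like the identity on an infinite-dimensional subspace. The paper commits the same slip when it calls $I_N$ a finite-rank operator. In fact the proposition is false on $\R$ as stated: take $\gamma(\xi)=1+\xi^2$ and $\beta(\xi)\equiv1$, so that $\Hg=H^1(\R)$, $\Hb=L^2(\R)$ and $\rho(N)=(1+N^2)^{-1}\to0$; yet $H^1(\R)\hookrightarrow L^2(\R)$ is not compact (translate a fixed bump function to infinity). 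The intended setting is the torus, where the frequency variable is discrete and $P_N$ is genuinely finite rank; there your argument and the paper's are both correct and essentially identical.
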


\begin{proof}
	The condition is necessary. Indeed, if there exists $\alpha>0$ such that $\rho(N) > \alpha, \ \forall N$, then the norms $\Nb{u}$ and $\Ng{u}$ are equivalent, the injection cannot be compact.
	Let us prove now that the condition is sufficient.	
	First, we have for $u \in \Hg$ :
	\[ \Nb{u} = \int_{\R} \beta(\xi) |\hat{u}(\xi)|^2 \leq \int_{\R} \gamma(\xi) |\hat{u}(\xi)|^2 = \Ng{u}.\]
	This shows that the injection is continuous. Now we prove that the injection is compact. We use finite rank operators and we take the limit. Let $I_N$ be the orthogonal operator on the polynomials of frequencies $\xi$ such that $-N \leq \xi \leq N$. We have
	\[ I_N u = \int_{|\xi| \leq N} \hat{u}(\xi) e^{i \xi x} d\xi. \]
	Thus
	\begin{align*}
		\Nb{(Id-I_N)u}^2 & = \int_{|\xi| > N} \beta(\xi) |\hat{u}(\xi)|^2, \\
			 & \leq \int_{|\xi| > N} \frac{\beta(\xi)}{\gamma(\xi)} \gamma(\xi) |\hat{u}(\xi)|^2, \\
			 & \leq \rho(N) \Ng{u}^2 \underset{N \rightarrow + \infty}{\longrightarrow} 0.
	\end{align*}
	Therefore $Id$ is a compact operator and consequently the injection is compact.		 
\end{proof}

\begin{Prop}
	Assume that $u, \ v \in \Hg$ and there exists a constant $C>0$ such that $\forall \xi, \ \eta \in \R$ we have 
	\[ \sqrt{\gamma(\xi)} \leq C \left( \sqrt{\gamma(\xi - \eta)} + \sqrt{\gamma(\eta)} \right). \]
	Then we have
	\[ \Ng{uv} \leq C \left( \Ng{u} \| \hat{v} \|_{L^1} + \Ng{v} \| \hat{u} \|_{L^1}  \right). \]
	Moreover if $\int_{\R} \frac{1}{\gamma(\xi)} < +\infty$ then $\Hg$ is an algebra.
\end{Prop}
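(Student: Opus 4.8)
The plan is to pass to the Fourier side, where the product $uv$ becomes the convolution $\hat u\ast\hat v$, and then to convert the subadditivity hypothesis on $\sqrt{\gamma}$ into a pointwise splitting of the weighted convolution, to which Young's inequality applies directly.

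First I would record that, since $u,v\in\Hg\subset L^2(\R)$, one has $uv\in L^1(\R)$ and $\widehat{uv}=\hat u\ast\hat v$, so that
\[ \widehat{uv}(\xi)=\int_\R \hat u(\xi-\eta)\,\hat v(\eta)\,d\eta,\qquad \xi\in\R. \]
Multiplying by $\sqrt{\gamma(\xi)}$, taking absolute values and bringing the weight under the integral, the hypothesis $\sqrt{\gamma(\xi)}\le C\big(\sqrt{\gamma(\xi-\eta)}+\sqrt{\gamma(\eta)}\big)$ gives, pointwise in $\xi$,
\[ \sqrt{\gamma(\xi)}\,\big|\widehat{uv}(\xi)\big|\le C\Big(\big[(\sqrt{\gamma}\,|\hat u|)\ast|\hat v|\big](\xi)+\big[|\hat u|\ast(\sqrt{\gamma}\,|\hat v|)\big](\xi)\Big). \]
Then I would take the $L^2(\R)$-norm in $\xi$: the left-hand side is exactly $\Ng{uv}$, and by Minkowski together with Young's inequality $\|f\ast g\|_{L^2}\le\|f\|_{L^2}\|g\|_{L^1}$ (used once with $f=\sqrt{\gamma}\,|\hat u|$, $g=|\hat v|$, and once with the roles of $u$ and $v$ exchanged) one gets
\[ \Ng{uv}\le C\big(\Ng{u}\,\|\hat v\|_{L^1}+\Ng{v}\,\|\hat u\|_{L^1}\big), \]
which is the announced estimate; it is trivially true when one of the two $L^1$-norms is infinite, so there is no loss in assuming $\hat u,\hat v\in L^1(\R)$.

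For the last assertion, assume $\int_\R\frac1\gamma<+\infty$. Writing $|\hat u(\xi)|=\frac{1}{\sqrt{\gamma(\xi)}}\sqrt{\gamma(\xi)}|\hat u(\xi)|$ and applying Cauchy--Schwarz exactly as in the proof of Proposition~\ref{prop1.1} gives $\|\hat u\|_{L^1}\le\big(\int_\R\frac1\gamma\big)^{1/2}\Ng{u}$ for every $u\in\Hg$, and likewise for $v$; inserting these two bounds into the estimate above yields $\Ng{uv}\le 2C\big(\int_\R\frac1\gamma\big)^{1/2}\Ng{u}\,\Ng{v}<+\infty$. Moreover Proposition~\ref{prop1.1} also gives $\Hg\hookrightarrow L^\infty(\R)$, so $u\in L^\infty(\R)$ and hence $uv\in L^2(\R)$; therefore $uv\in\Hg$, and the displayed bound shows that the multiplication map is bounded on $\Hg$, i.e.\ $\Hg$ is an algebra.

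I expect no genuine obstacle here: the proof is essentially bookkeeping. The only points deserving a word of justification are the identity $\widehat{uv}=\hat u\ast\hat v$ and the Fubini step used to pull $\sqrt{\gamma}$ inside the convolution integral, but both are routine since $u,v\in L^2(\R)$ (whence $uv\in L^1$) and, in the non-trivial case, $\hat u,\hat v\in L^1(\R)$.
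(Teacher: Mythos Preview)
Your proof is correct and follows essentially the same route as the paper: pass to the Fourier side, use the subadditivity of $\sqrt{\gamma}$ to split the weighted convolution into two pieces, apply Young's inequality $\|f\ast g\|_{L^2}\le\|f\|_{L^2}\|g\|_{L^1}$, and then Cauchy--Schwarz for the algebra statement. Your version is in fact a bit cleaner, since you use Minkowski directly rather than squaring first, and you take the extra care to note that $uv\in L^2(\R)$ so that $uv$ genuinely lies in $\Hg$.
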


\begin{proof}
	Let $u, \, v \in H_{\gamma}(\R)$. We have
	\[ |uv|_{\gamma}^2 = \int_{\R} \gamma(\xi) |\widehat{uv}(\xi)|^2. \]
	We remind that $\widehat{uv}(\xi) = \hat{u} \ast \hat{v} (\xi)$. Using the inequality 
	\[ \sqrt{\gamma(\xi)} \leq C \left( \sqrt{\gamma(\xi - \eta)} + \sqrt{\gamma(\eta)} \right),\]
	we obtain for all $\xi, \ \eta \in \R$
	\[ \sqrt{\gamma(\xi)} |\widehat{uv}(\xi)| \leq C \left( \int_{\R} \sqrt{\gamma(\xi-\eta)} |\hat{u}(\xi-\eta) \hat{v}(\eta)| d\eta + \int_{\R} \sqrt{\gamma(\eta)} |\hat{u}(\xi-\eta) \hat{v}(\eta)| d\eta \right). \]
	Hence
	\begin{align*}
		|uv|_{\gamma}^2 & \leq C^2 \int_{\R} \left( \int_{\R} \sqrt{\gamma(\xi-\eta)} |\hat{u}(\xi-\eta) \hat{v}(\eta)| d\eta + \int_{\R} \sqrt{\gamma(\eta)} |\hat{u}(\xi-\eta) \hat{v}(\eta)| d\eta \right)^2 d\xi, \\
			& \leq C^2 \int_{\R} \left[ \left( \int_{\R} \sqrt{\gamma(\xi-\eta)} |\hat{u}(\xi-\eta) \hat{v}(\eta)| d\eta \right)^2 + \left( \int_{\R} \sqrt{\gamma(\eta)} |\hat{u}(\xi-\eta) \hat{v}(\eta)| d\eta \right)^2 \right] d\xi, \\
			& \leq C^2 \left( \left\lVert \left( \sqrt{\gamma(\xi)}|\hat{u}| \right) \ast |\hat{v}| \right\rVert_{L^2}^2 + \left\lVert |\hat{u}| \ast \left( \sqrt{\gamma(\xi)} |\hat{v}| \right) \right\rVert_{L^2}^2  \right).
	\end{align*}
	However, for $f \in L^1$ and $g \in L^2$, we have
	\[  \| |f| \ast |g| \|_{L^2}^2 \leq \| g \|_{L^2}^2 \| f \|_{L^1}^2. \]		
	Thus
	\[ |uv|_{\gamma}^2 \leq C \left( |u|_{\gamma}^2 \|\hat{v}\|_{L^1}^2 + |v|_{\gamma}^2 \|\hat{u}\|_{L^1}^2 \right). \]
	From proposition \ref{prop1.1}, we know there exists a constant $c>0$ such that $\| \hat{u} \|_{L^1} \leq c |u|_{\gamma}$ if $\int_{\R} \frac{1}{\gamma(\xi)} < +\infty$. Then, there exists $\tilde{C}>0$ such that
	\[ |uv|_{\gamma} \leq \tilde{C} |u|_{\gamma} |v|_{\gamma}.\]
\end{proof}

\section{Local well-posedness}

	We study the following Cauchy problem : $\forall x \in \R, \ \forall t>0$,
\begin{numcases}{}
	u_t + u_x + u_{xxx} + u^p u_x + \mathscr{L}_{\gamma} (u) = 0, \label{pb_Cauchy1} \\
	u(x,t=0) = u_0(x). \label{pb_Cauchy2}
\end{numcases}	
The semi-group generated by the linear part is written as	
\[ S_t u := \int_{\R} e^{i\xi x} e^{i(\xi^3-\xi)t-\gamma(\xi)t} \hat{u}(\xi) d\xi. \]
In the rest of the section, $f(u)$ denotes the non-linear part of the equation, i.e., $f(u) = u^p u_x$.
We first state a result of regularization.
\begin{Lemme}\label{lemme_ex_loc}
	Assume that $s , \ r \in \R^+$. Then there exists a constant $C_r > 0$, depending only on $r$, such that $\forall u \in \Hgs{s}$ and $\forall t>0$ we have
	\[ \Ngs{S_t u}{s+r}^2 \leq \frac{C_r}{t^r} \Ngs{u}{s}^2 . \]
\end{Lemme}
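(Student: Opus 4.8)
The plan is to reduce everything to a one–variable estimate on the Fourier side. First I would note that, by the very definition of $S_t$, the Fourier transform of $S_t u$ is $\widehat{S_t u}(\xi) = e^{i(\xi^3-\xi)t}\,e^{-\gamma(\xi)t}\,\hat u(\xi)$, so that the dispersive factor $e^{i(\xi^3-\xi)t}$ has modulus one and
\[ \bigl|\widehat{S_t u}(\xi)\bigr| = e^{-\gamma(\xi)t}\,|\hat u(\xi)|, \qquad \xi\in\R,\ t>0. \]
Consequently, by definition of the norms,
\[ \Ngs{S_t u}{s+r}^2 = \int_{\R} \gamma(\xi)^{s+r}\, e^{-2\gamma(\xi)t}\,|\hat u(\xi)|^2\,d\xi , \qquad \Ngs{u}{s}^2 = \int_{\R} \gamma(\xi)^{s}\,|\hat u(\xi)|^2\,d\xi . \]

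Second, since $\gamma(\xi)>0$ everywhere, it suffices to establish the pointwise bound
\[ \gamma(\xi)^{r}\,e^{-2\gamma(\xi)t} \le \frac{C_r}{t^{r}} \qquad \text{for all } \xi\in\R,\ t>0, \]
because multiplying this by $\gamma(\xi)^{s}|\hat u(\xi)|^2$ and integrating over $\R$ then yields the stated inequality. Putting $x := t\,\gamma(\xi)>0$, the displayed bound is equivalent to $x^{r}e^{-2x}\le C_r$ for all $x>0$, i.e. to the finiteness of the quantity $C_r := \sup_{x>0} x^{r}e^{-2x}$, which visibly depends only on $r$.

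Third, I would compute this supremum by elementary calculus: the map $x\mapsto x^{r}e^{-2x}$ vanishes at $0$ and at $+\infty$ and has a single critical point at $x=r/2$, hence $C_r = (r/2)^{r}e^{-r} = \bigl(r/(2e)\bigr)^{r}$ for $r>0$, with the convention $C_0=1$ (the case $r=0$ is trivial, the left-hand side being then identically $1$). Substituting $C_r$ back in gives $\Ngs{S_t u}{s+r}^2 \le C_r\,t^{-r}\Ngs{u}{s}^2$, which is exactly the claim; in particular the finiteness of the right-hand side also shows that $S_t u\in\Hgs{s+r}$ for every $t>0$ whenever $u\in\Hgs{s}$, so that the statement is meaningful.

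There is essentially no serious obstacle here: the only points deserving a word of care are the strict positivity of $\gamma$ (which makes the substitution $x=t\gamma(\xi)$ licit and the symbol $e^{-\gamma(\xi)t}$ well behaved) and the degenerate case $r=0$. The whole content of the lemma is the parabolic-type smoothing induced by the factor $e^{-\gamma(\xi)t}$, and the monomial-times-exponential bound $x^r e^{-2x}\le (r/2e)^r$ is precisely what turns this smoothing into the quantitative gain of $r$ derivatives with the explicit $t^{-r}$ blow-up rate.
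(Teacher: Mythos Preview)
Your argument is correct and follows exactly the same route as the paper: expand $\Ngs{S_t u}{s+r}^2$ on the Fourier side, pull out the factor $\gamma(\xi)^r e^{-2\gamma(\xi)t}$, and bound its supremum by elementary calculus to obtain $C_r=(r/2)^r e^{-r}$. The only difference is that you make the substitution $x=t\gamma(\xi)$ and the trivial case $r=0$ explicit, whereas the paper states the pointwise bound directly.
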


\begin{proof}
	Let $r \in \R^+$, $u \in \Hgs{s}$ and $t>0$. Then we have
	\begin{align*}
		\Ngs{S_t u}{s+r}^2 & = \int_{\R} \gamma(\xi)^{s+r} \left| e^{-\gamma(\xi)t} \hat{u}(\xi) \right|^2 d\xi \\
			& \leq \sup\limits_{\xi \in \R} \left( \gamma(\xi)^r e^{-2\gamma(\xi)t} \right) \Ngs{u}{s}^2.
	\end{align*}
	But $\forall \xi \in \R$
	\[ \gamma(\xi)^r e^{-2\gamma(\xi)t} \leq \frac{\left( \frac{r}{2} \right)^r e^{-r}}{t^r} = \frac{C_r}{t^r}. \]
	Thus
	\[ \Ngs{S_t u}{s+r}^2 \leq \frac{C_r}{t^r} \Ngs{u}{s}^2. \]
\end{proof}

\begin{Thm}\label{thm_local}
	 Assume that there exists $r\in ]0,2[$ and for all $\xi \in \R$, $\gamma(\xi) \geq \xi^{\frac{2}{r}}$. We also assume that $\int_{\R} \frac{1}{\gamma(\xi)^s} < +\infty$ and there exists a constant $C>0$ such that $\forall \xi, \ \eta \in \R$ and $s \in \R^+$ we have
	\[ \sqrt{\gamma(\xi)^s} \leq C \left( \sqrt{\gamma(\xi - \eta)^s} + \sqrt{\gamma(\eta)^s} \right). \]
	Then there exists a unique solution in $\mathscr{C}\left( [-T,T],\Hgs{s} \right)$ of the Cauchy problem \eqref{pb_Cauchy1}-\eqref{pb_Cauchy2}.
	
	Moreover, for all $M>0$ with $\Ngs{u_0}{s} \leq M$ and $\Ngs{v_0}{s} \leq M$, there exists a constant $C_1>0$ such that the solution $u$ and $v$, associated with the initial data $u_0$ and $v_0$ respectively, satisfy for all $t \leq \left( \frac{1}{C_0 M^p} \right)^{\frac{2}{r}}$
	\[ \Ngs{u(\cdot,t) - v(\cdot,t)}{s} \leq C_1 \Ngs{u_0 - v_0}{s}. \] 
\end{Thm}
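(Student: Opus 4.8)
The plan is to solve the Duhamel formulation of \eqref{pb_Cauchy1}--\eqref{pb_Cauchy2},
\[ u(t)=\Phi(u)(t):=S_t u_0-\int_0^t S_{t-\tau}\,f(u(\tau))\,d\tau,\qquad f(u)=u^p u_x, \]
by a contraction argument in a closed ball of $X_T:=\mathscr{C}\!\left([-T,T],\Hgs{s}\right)$ for $T>0$ small; the uniqueness statement and the Lipschitz dependence on the data then come out of the same bounds. Two linear ingredients are needed. Since $\left|e^{i(\xi^3-\xi)t-\gamma(\xi)t}\right|=e^{-\gamma(\xi)t}\le1$ for $t\ge0$, $S_t$ is a contraction on $\Hgs{s}$; and, using $\gamma(\xi)\ge\xi^{2/r}$ (hence $\xi^2\le\gamma(\xi)^r$), the same computation as in Lemma~\ref{lemme_ex_loc} yields the derivative-smoothing bound
\[ \Ngs{\partial_x S_t g}{s}^2=\int_\R\gamma(\xi)^s\,\xi^2\,e^{-2\gamma(\xi)t}\,|\hat g(\xi)|^2\,d\xi\le\Big(\sup_{\xi\in\R}\gamma(\xi)^r e^{-2\gamma(\xi)t}\Big)\Ngs{g}{s}^2\le\frac{C_r}{t^r}\,\Ngs{g}{s}^2, \]
and here it is crucial that $r\in\,]0,2[$, so that $\tau\mapsto\tau^{-r/2}$ is integrable near $0$.

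For the nonlinear term I would integrate by parts, $f(u)=\frac1{p+1}\partial_x\!\left(u^{p+1}\right)$, so that the lost derivative is carried by the semigroup and not by $u$. Since $\int_\R\gamma(\xi)^{-s}<+\infty$ and $\sqrt{\gamma^s}$ satisfies the assumed sub-additivity, the last proposition of Section~1 shows that $\Hgs{s}$ is a Banach algebra; iterating it gives $\Ngs{w^{p+1}}{s}\le C_p\Ngs{w}{s}^{p+1}$ and, via $w^{p+1}-z^{p+1}=(w-z)\sum_{j=0}^p w^j z^{p-j}$, the bound $\Ngs{w^{p+1}-z^{p+1}}{s}\le C_p\big(\Ngs{w}{s}+\Ngs{z}{s}\big)^p\Ngs{w-z}{s}$. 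Combining these with the smoothing estimate, commuting $\partial_x$ with $S_{t-\tau}$ and changing variable $\sigma=t-\tau$ (the integral $\int_0^t\sigma^{-r/2}\,d\sigma$ being finite since $r<2$), one gets
\[ \Big\|\int_0^t S_{t-\tau}f(u(\tau))\,d\tau\Big\|_{X_T}\le C\,T^{1-\frac r2}\,\|u\|_{X_T}^{p+1}, \]
and analogously $\|\Phi(u)-\Phi(v)\|_{X_T}\le C\,T^{1-\frac r2}\big(\|u\|_{X_T}+\|v\|_{X_T}\big)^p\|u-v\|_{X_T}$.

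On the ball $B:=\{w\in X_T:\|w\|_{X_T}\le2\Ngs{u_0}{s}\}$ this reads $\|\Phi(u)\|_{X_T}\le\Ngs{u_0}{s}+C\,T^{1-\frac r2}\big(2\Ngs{u_0}{s}\big)^{p+1}$ and $\|\Phi(u)-\Phi(v)\|_{X_T}\le C\,T^{1-\frac r2}\big(4\Ngs{u_0}{s}\big)^{p}\|u-v\|_{X_T}$, so choosing $T$ below a threshold of the type shown in the statement (a fixed negative power of $\Ngs{u_0}{s}^p$) makes $\Phi$ a $\tfrac12$-contraction of $B$ into itself; Banach's theorem then gives the unique solution $u\in X_T$, and uniqueness in all of $\mathscr{C}([-T,T],\Hgs{s})$ follows by the usual continuation argument (two solutions agree on a short interval by the same difference estimate, then one iterates). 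For the last assertion, if $\Ngs{u_0}{s},\Ngs{v_0}{s}\le M$ both solutions stay in $B(0,2M)$ for $t\le(C_0M^p)^{-2/(2-r)}$, and the difference estimate becomes $\|u-v\|_{X_t}\le\Ngs{u_0-v_0}{s}+\tfrac12\|u-v\|_{X_t}$, i.e.\ $\Ngs{u(\cdot,t)-v(\cdot,t)}{s}\le2\,\Ngs{u_0-v_0}{s}$, which is the claim with $C_1=2$.

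I expect the main obstacle to be precisely this loss of one derivative in $u^pu_x$: the space $\Hgs{s}$ is not stable under $\partial_x$, so the Duhamel integral is a priori ill-defined, and the whole scheme works only because the damped semigroup regularizes. Making this quantitative — showing that the gain $\gamma^r$ provided by $S_t$ on the $L^2(\gamma^s)$-scale exactly cancels the factor $\xi^2$ produced by $\partial_x$, at the price of the borderline-integrable singularity $\tau^{-r/2}$ (whence the restriction $r<2$) — is the heart of the proof; once Lemma~\ref{lemme_ex_loc}, the algebra property and the sub-additivity of $\sqrt{\gamma^s}$ are available, the rest is the standard Kato-type local-well-posedness machinery.
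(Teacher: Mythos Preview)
Your proof is correct and follows essentially the same route as the paper: Duhamel formulation, Banach fixed point on a ball in $\mathscr{C}([0,T],\Hgs{s})$, the algebra property of $\Hgs{s}$ for the nonlinearity, and the smoothing of $S_t$ (Lemma~\ref{lemme_ex_loc}) combined with $\xi^2\le\gamma(\xi)^r$ to absorb the derivative in $u^pu_x=\tfrac{1}{p+1}\partial_x(u^{p+1})$, yielding the integrable singularity $(t-\tau)^{-r/2}$ since $r<2$. The only cosmetic differences are that the paper phrases the smoothing step as $\Ngs{S_{t-\tau}f(u)}{s}\le C_r(t-\tau)^{-r/2}\Ngs{f(u)}{s-r}$ and then bounds $\Ngs{f(u)}{s-r}\le C\Ngs{u}{s}^{p+1}$ via $\xi^2/\gamma(\xi)^r\le1$, whereas you put the $\partial_x$ on the semigroup and estimate $\Ngs{\partial_x S_t g}{s}$ directly; and the paper's contraction ball is $\{\,\Ngs{u-u_0}{s}\le3\Ngs{u_0}{s}\,\}$ rather than your $\{\,\|u\|_{X_T}\le2\Ngs{u_0}{s}\,\}$. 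These are equivalent packagings of the same computation.
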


\begin{proof}
	Thanks to Duhamel's formula, $\Phi(u)$ is solution of the Cauchy problem, where
	\[ \Phi(u) = S_t u_0 - \int_0^t S_{t-\tau} f(u(\tau)) d\tau . \]
	Let show that $u$ is the unique fixed-point of $\Phi$. 
	We introduce the closed ball $\bar{B}(T)$ defined for $T>0$ by 
	\[ \bar{B}(T) := \left\{ u \in \mathscr{C}\left( [0,T];\Hgs{s} \right) \mbox{~s.t.~} \Ngs{u(t)-u_0(t)}{s} \leq 3 \Ngs{u_0}{s} \right\}. \]
	We apply the Picard fixed-point theorem. We first show that $\Phi \left( \bar{B}(T) \right) \subset \bar{B}(T)$. Let us take $u \in \bar{B}(T)$ and show that $\Phi (u(t)) \in \bar{B}(T)$. We have
	\[ \Ngs{\Phi(u(t))}{s} \leq \Ngs{S_t u_0}{s} + \int_0^t \Ngs{S_{t-\tau}f(u(\tau))}{s}. \]
	On the one hand, we have
	\[ \Ngs{S_t u_0}{s}^2 = \int_{\R} \gamma(\xi)^s \left| \widehat{S_t u_0} \right|^2 \leq \int_{\R} \gamma(\xi)^s \left| \hat{u_0} \right|^2 \leq \Ngs{u_0}{s}^2. \]
	On the other hand, we apply Lemma \ref{lemme_ex_loc}
	\begin{align*}
		\Ngs{S_{t-\tau} f(u(\tau))}{s} & = \Ngs{S_{t-\tau} f(u(\tau))}{s-r+r} \\
			& \leq \frac{C_r}{(t-\tau)^{\frac{r}{2}}} \Ngs{f(u(\tau))}{s-r}.
	\end{align*}
	But
	\begin{align*}
		\Ngs{f(u(\tau))}{s-r}^2 & = \frac{1}{(p+1)^2} \int_{\R} \frac{\xi^2}{\gamma(\xi)^r} \gamma(\xi)^s \left| \widehat{u^{p+1}} \right|^2 d\xi \\
			& \leq \frac{1}{(p+1)^2} \Ngs{u^{p+1}}{s}^2 \\
			\intertext{because $\gamma(\xi)>\xi^{\frac{2}{r}}$, and $\Hgs{s}$ beeing an algebra, we have}
		\Ngs{f(u(\tau))}{s-r} & \leq C \Ngs{u}{s}^{p+1}.
	\end{align*}
	Consequently
	\begin{align*}
		\Ngs{\Phi(u)}{s} & \leq \Ngs{u_0}{s} + \int_0^t \frac{C}{(t-\tau)^{\frac{r}{2}}} \Ngs{u}{s}^{p+1} d\tau \\
			& \leq \Ngs{u_0}{s} + C \sup\limits_{t \in [0,T]} \left( \Ngs{u}{s}^{p+1} \right) \int_0^t \frac{1}{(t-\tau)^{\frac{r}{2}}} d\tau \\
			& \leq \Ngs{u_0}{s} + \frac{C}{1-\frac{r}{2}} T^{1-\frac{r}{2}} \sup\limits_{t \in [0,T]} \left( \Ngs{u}{s}^{p+1} \right).
	\end{align*}
	But $u \in \bar{B}(T)$, then we have
	\[ \Ngs{u(t)}{s} - \Ngs{u_0}{s} \leq \Ngs{u(t)-u_0}{s} \leq 3 \Ngs{u_0}{s}. \]
	That involves
	\[ \Ngs{u(t)}{s} \leq 4 \Ngs{u_0}{s}, \]
	and
	\[ \sup\limits_{t \in [0,T]} \left( \Ngs{u}{s}^{p+1} \right) \leq \left( \sup\limits_{t \in [0,T]}  \Ngs{u}{s} \right)^{p+1} \leq 4^{p+1} \Ngs{u_0}{s}^{p+1}. \]
	We have $\Phi(u(t)) \in \bar{B}(T)$ if the inequality 
	\[ \Ngs{\Phi(u(t)) - u_0}{s} \leq 2 \Ngs{u_0}{s} + C_r T^{1-\frac{r}{2}} \left( 4^{p+1} \Ngs{u_0}{s}^{p+1} \right) \leq 3 \Ngs{u_0}{s}  \]
	is true i.e. if
	\[ 0 < T^{1-\frac{r}{2}} \leq \frac{1}{4^{p+1} C_r \Ngs{u_0}{s}^p}. \]

	Now let us show that $\Phi$ is a strictly contracting map. Let $u, \ v \in \bar{B}(T)$, we prove that $\forall t \in [0,T]$,
	\[ \sup\limits_{t \in [0,T]} \Ngs{\Phi(u(t)) - \Phi(v(t))}{s} \leq k \sup\limits_{t \in [0,T]} \Ngs{u-v}{s} \]
	with $k \in [0,1[$.
	As previously, we have
	\begin{align*}
		\Ngs{\Phi(u(t)) - \Phi(v(t))}{s} & = \Ngs{\int_0^t S_{t-\tau} \left( f(u(\tau)) - f(v(\tau)) \right) d\tau}{s} \\
			& \leq \int_0^t \frac{C_0}{(t-\tau)^{\frac{r}{2}}} \Ngs{u^{p+1} - v^{p+1}}{s}.
	\end{align*}
	Using the equality 
	\[ u^{p+1} - v^{p+1} = (u-v) \sum\limits_{i+j=p}u^i v^j \]
	and the injection results, we obtain
	\begin{align*}
		\Ngs{u^{p+1} - v^{p+1}}{s} & \leq C_1 \Ngs{u-v}{s} \Ngs{\sum\limits_{i+j=p}u^i v^j}{s} \\
			& \leq C_2 \Ngs{u-v}{s} \sum\limits_{i+j=p} \Ngs{u}{s}^i \Ngs{v}{s}^j \\
			& \leq C_3 \Ngs{u-v}{s} \Ngs{u_0}{s}^p.
	\end{align*}
	Then we have
	\begin{align*}
		\sup\limits_{t \in [0,T]} \Ngs{\Phi(u(t)) - \Phi(v(t))}{s} & \leq C \Ngs{u_0}{s}^p \int_0^t \frac{\Ngs{u-v}{s}}{(t-\tau)^{\frac{r}{2}}} d\tau \\
			& \leq C \Ngs{u_0}{s}^p T^{1-\frac{r}{2}} \sup\limits_{t \in [0,T]} \left( \Ngs{u-v}{s} \right). 
	\end{align*}
	The map $\Phi$ is strictly contracting if
	\[ T^{1-\frac{r}{2}} < \frac{1}{C\Ngs{u_0}{s}^p}. \]
	
	It remains to prove the continuity with respect to the initial data. Duhamel's formula gives for $t \in [0,T]$, $T^{1-\frac{r}{2}} \leq \frac{1}{C_0 M^p}$ 
	\begin{align*}
		\Ngs{u-v}{s} & \leq \Ngs{u_0 - v_0}{s} + \int_0^t \Ngs{f(u)-f(v)}{s} d\tau \\
		& \Ngs{u_0 - v_0}{s} + C' T^{1-\frac{r}{2}} \left( \sum\limits_{i+j=p} \Ngs{u_0}{s}^i \Ngs{v_0}{s}^j \right) \Ngs{u-v}{s} \\
		& \Ngs{u_0 - v_0}{s} + C' T^{1-\frac{r}{2}} \left( \sum\limits_{i+j=p} \Ngs{u_0}{s}^i \Ngs{v_0}{s}^j \right) \sup\limits_{t \in [0,T]} \left( \Ngs{u-v}{s} \right).
	\end{align*}
	It involves
	\[ \Ngs{u-v}{s} \leq C_1 \Ngs{u_0-v_0}{s}. \]	
\end{proof}

\begin{remark}
	Actually we can proove the local well-posedness for every $\gamma$ using a parabolic regularisation
	\[ u_t + u_x + u_{xxx} + u^p u_x + \mathscr{L}_{\gamma} (u) - \epsilon u_{xx} = 0. \]
	Using lemma \ref{lemme_ex_loc} with $\gamma(\xi) = \xi^2$, the same computations as theorem \ref{thm_local} and taking the limit $\epsilon \rightarrow 0$ give the result \cite{Iorio,BonaSmith1}.
\end{remark}

\section{Global well-posedness}
	
	We work here under the hypothesis of the local theorem and study the global well-posedness of the damped KdV equation. We use here an energy method \cite{BonaSmith1,BonaSmith2}

	\begin{Thm}\label{thm_global}
		If $p<4$, for all $\gamma$, the unique solution is global in time, valued in $H^1(\R)$.
		Else ($p\geq 4$), there exists a constant $\theta >0$ such that if $\gamma(\xi) \geq \theta, \ \forall \xi \in \R$ then the unique solution is global in time, valued in $H^2(\R)$.
	\end{Thm}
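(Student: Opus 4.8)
The natural approach is the energy method: in each regime one derives an a priori bound, on every interval $[0,T]$, for the Sobolev norm in which the solution is claimed to live, and then invokes the local theory (Theorem~\ref{thm_local}, or the parabolic-regularisation remark when $\gamma$ falls outside its hypotheses) to continue the solution globally. We shall use constantly the dissipation identity of the introduction, $\frac{d}{dt}\Normp{u}{2}^2=-\Ng{u}^2\le0$, which gives $\Normp{u(\cdot,t)}{2}\le\Normp{u_0}{2}$ and $\int_0^t\Ng{u}^2\,d\tau\le\Normp{u_0}{2}^2$; when moreover $\gamma\ge\theta$, the coercivity $\int_\R\gamma(\xi)\abs{\hat v(\xi)}^2d\xi\ge\theta\Normp{v}{2}^2$ (applied to $v=u,u_x,u_{xx}$) yields in particular $\Normp{u(\cdot,t)}{2}\le\Normp{u_0}{2}e^{-\theta t}$.

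\textbf{Case $p<4$.} Let $E(u):=\tfrac12\Normp{u_x}{2}^2-\tfrac1{(p+1)(p+2)}\int_\R u^{p+2}$ be the Hamiltonian of the generalized KdV flow (which conserves it). Along \eqref{pb_Cauchy1},
\[ \frac{d}{dt}E(u)=\langle u_{xx},\mathscr{L}_{\gamma}(u)\rangle+\frac1{p+1}\langle u^{p+1},\mathscr{L}_{\gamma}(u)\rangle=-\int_\R\xi^2\gamma(\xi)\abs{\hat u(\xi)}^2d\xi+\frac1{p+1}\langle u^{p+1},\mathscr{L}_{\gamma}(u)\rangle , \]
where the first term is $\le0$. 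Since $p<4$, the one-dimensional Gagliardo-Nirenberg inequality $\int_\R u^{p+2}\le C\Normp{u_x}{2}^{p/2}\Normp{u}{2}^{(p+4)/2}$ has $p/2<2$, so Young's inequality and $\Normp{u}{2}\le\Normp{u_0}{2}$ give $\tfrac12\Normp{u_x}{2}^2=E(u)+\tfrac1{(p+1)(p+2)}\int_\R u^{p+2}\le E(u)+\tfrac14\Normp{u_x}{2}^2+C(\Normp{u_0}{2})$, hence $\Normp{u_x}{2}^2\le 4E(u)+C$ and $E$ is bounded below. For the sign-indefinite term, Cauchy-Schwarz in frequency gives $\langle u^{p+1},\mathscr{L}_{\gamma}(u)\rangle\le\Ng{u^{p+1}}\,\Ng{u}$, and one controls $\Ng{u^{p+1}}$ via the injection and algebra estimates of Section~1 (for bounded $\gamma$, simply $\Ng{u^{p+1}}\le\Normp{\gamma}{\infty}^{1/2}\Normp{u^{p+1}}{2}\le C\Normp{u_0}{2}^{(p+2)/2}\Normp{u_x}{2}^{p/2}$ and $\Ng{u}\le\Normp{\gamma}{\infty}^{1/2}\Normp{u_0}{2}$), so that the same Gagliardo-Nirenberg/Young step yields $\frac{d}{dt}E(u)\le C_1+C_2E(u)$ with $C_1,C_2$ depending only on $\gamma$ and the data. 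Gronwall then bounds $E(u(\cdot,t))$ on $[0,T]$, and with the estimate above $\left\lVert u(\cdot,t)\right\rVert_{H^1}^2\le\Normp{u_0}{2}^2+4E(u(\cdot,t))+C$ is bounded there; continuation gives a global $H^1$ solution for every $\gamma$.

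\textbf{Case $p\ge4$, $\gamma\ge\theta$.} Here $E$ no longer controls $\Normp{u_x}{2}$, so we run a bootstrap on the $H^2$-norm, exploiting the dissipation. Writing the $H^1$ and $H^2$ energy identities and integrating by parts, every contribution carrying a derivative of order $\ge3$ vanishes or cancels, and one is left with
\[ \frac{d}{dt}\Normp{u_x}{2}^2\le-2\theta\Normp{u_x}{2}^2-p\int_\R u^{p-1}u_x^3,\qquad \frac{d}{dt}\Normp{u_{xx}}{2}^2\le-2\theta\Normp{u_{xx}}{2}^2+R(u), \]
with $R(u)$ a finite sum of terms $\int_\R u^{p-2}u_x^3u_{xx}$, $\int_\R u^{p-1}u_xu_{xx}^2$ of order $\le2$. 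Estimating these by Hölder with $\Normp{u}{\infty}\le C\Normp{u}{2}^{1/2}\Normp{u_x}{2}^{1/2}$, $\Normp{u_x}{\infty}\le C\Normp{u_x}{2}^{1/2}\Normp{u_{xx}}{2}^{1/2}$ and the decay $\Normp{u}{2}\le\Normp{u_0}{2}e^{-\theta t}$, one obtains, with $N(t):=\left\lVert u(\cdot,t)\right\rVert_{H^2}^2$,
\[ N'(t)\le-2\theta\,N(t)+C\,e^{-c\theta t}\,N(t)^q \]
for fixed $c>0$, $q>1$ and $C=C(\Normp{u_0}{2})$, all independent of $\theta$. Let $t_\star:=\sup\{t:N(s)<4N(0)\ \forall s\le t\}$; if $t_\star<\infty$ then $N(t_\star)=4N(0)$, while on $[0,t_\star]$, discarding $-2\theta N\le0$ and using $N\le4N(0)$,
\[ N(t_\star)\le N(0)+C\,(4N(0))^q\int_0^{\infty}e^{-c\theta t}\,dt=N(0)+\frac{C\,(4N(0))^q}{c\,\theta}<4N(0) \]
once $\theta>\theta_0:=C\,4^q N(0)^{q-1}/(3c)$, a contradiction. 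Hence for $\theta\ge\theta_0$ the $H^2$-norm stays bounded and the local $H^2$-solution extends globally. This is the constant of the statement, $\theta=\theta_0$; it depends on $\left\lVert u_0\right\rVert_{H^2}$ (equivalently, for fixed $\theta$ the conclusion holds for all data with $\left\lVert u_0\right\rVert_{H^2}$ below the corresponding threshold).

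\textbf{Expected obstacle.} The genuinely delicate point in the case $p<4$ is the estimate of the indefinite damping term $\tfrac1{p+1}\langle u^{p+1},\mathscr{L}_{\gamma}(u)\rangle$ for a general symbol $\gamma$: it must be absorbed using Cauchy-Schwarz in frequency together with the injection/algebra properties of $\Hg$ and the dissipation bound $\int_0^t\Ng{u}^2\,d\tau\le\Normp{u_0}{2}^2$, rather than a single inequality, and it is cleanest for bounded $\gamma$. In the case $p\ge4$ the difficulty is structural: one must check that the $H^1$ and $H^2$ estimates actually close (no surviving derivative of order $\ge3$, every nonlinear term of order $\le2$) and that the dissipation $-2\theta\Normp{u_{xx}}{2}^2$ plus the exponential decay of $\Normp{u}{2}$ beats the worst nonlinear term on the region $N\le4N(0)$ — which is precisely what forces $\theta$ large.
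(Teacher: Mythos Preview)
Here your Gronwall route does not close for the general $\gamma$ claimed in the statement, and this is a real gap rather than a technicality. Bounding $\langle u^{p+1},\mathscr{L}_{\gamma}(u)\rangle$ by $\Ng{u^{p+1}}\,\Ng{u}$ forces you to control $\Ng{u^{p+1}}$, which for unbounded symbols (e.g.\ $\gamma(\xi)=\delta\xi^2$, one of the two motivating examples in the introduction) involves derivatives beyond $H^1$ and cannot be absorbed into $E(u)$; the algebra/injection hypotheses of Section~1 do not help, since they put $u^{p+1}$ back into $\Hg$, not into $H^1$. The paper sidesteps this entirely by arguing in \emph{physical space} and in \emph{time-integrated} form. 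From
\[ E(u)+\int_0^t\Ng{u_x}^2\,d\tau-\tfrac1{p+1}\int_0^t\!\!\int_\R\mathscr{L}_{\gamma}(u)\,u^{p+1}\,dx\,d\tau=E(u_0), \]
the indefinite term is estimated by pulling out $\sup_{[0,t]}\Normp{u}{\infty}^p$ and then invoking the dissipation budget $\int_0^t\Ng{u}^2\,d\tau\le\tfrac12 N(u_0)$, so that $\Ng{u^{p+1}}$ never appears and no structural hypothesis on $\gamma$ is used. Together with $\Normp{u}{\infty}^2\le2\Normp{u_0}{2}\Normp{u_x}{2}$ this yields the purely algebraic inequality
\[ \sup_{[0,t]}\Normp{u_x}{2}^2-C\sup_{[0,t]}\Normp{u_x}{2}^{p/2}\le C_0, \]
which bounds $\Normp{u_x}{2}$ \emph{uniformly in $t$} precisely because $p/2<2$. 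No Gronwall is needed, and the bound does not grow with time; that physical-space step is the missing idea.

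\textbf{Case $p\ge4$.} Your bootstrap via exponential decay of $\Normp{u}{2}$ is correct, but the paper reaches the same conclusion more directly. It casts the $\Normp{u_{xx}}{2}$-identity as
\[ \tfrac12\tfrac{d}{dt}\Normp{u_{xx}}{2}^2+\int_\R\bigl(\gamma(\xi)-\Omega\bigr)\,\bigl|\widehat{u_{xx}}(\xi)\bigr|^2\,d\xi\le0,\qquad \Omega=\Omega\bigl(\Normp{u}{2},\Normp{u_{xx}}{2}\bigr), \]
with $\Omega$ explicit and increasing in each argument. Since $\Normp{u}{2}$ is non-increasing, the choice $\theta:=\Omega(\Normp{u_0}{2},\Normp{u_{0,xx}}{2})$ makes the bracket nonnegative at $t=0$, so $\Normp{u_{xx}}{2}$ cannot grow, so $\Omega$ cannot grow, and the barrier sustains itself for all time. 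Only the \emph{monotonicity} of $\Normp{u}{2}$ is used, not the rate $e^{-\theta t}$; there is no time integration, no separate $H^1$ step, and $\theta$ is produced as an explicit function of the $H^2$-data. Your route buys a small margin (the $e^{-c\theta t}$ factor gives slack in the constants) at the price of a longer argument.
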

	
	\begin{proof}	
		\noindent\textbf{Case $p<4$:}
		We begin by introducing $N(u)$ and $E(u)$, two invariants of the KdV equation without the damping term, which are the $L^2$-norm and the energy. Their expressions are 
		\begin{align*}
			& N(u) = \int_{\R} u^2 dx = \Normp{u}{2}^2, \\
			& E(u) = \frac{1}{2}\int_{\R} u_x^2 dx - \frac{1}{(p+1)(p+2)}\int_{\R}u^{p+2}dx = \frac{1}{2} \Normp{u_x}{2}^2 - \frac{1}{(p+1)(p+2)} \Normp{u}{p+2}^{p+2}. 
		\end{align*}		 
		We first multiply \eqref{pb_Cauchy1} by $u$ and we integrate with respect to $x$. Then we have
		\[ \frac{1}{2}\frac{d}{dt}\int_{\R}u^2 dx + \int_{\R}\mathscr{L}_{\gamma}(u)u dx = 0. \]
		Integrating with respect to time, we obtain
		\[ \int_{\R}u^2 dx + 2\int_0^t \left( \int_{\R}\mathscr{L}_{\gamma}(u)u dx \right)d\tau = \int_{\R} u_0^2 dx. \]
		Which can also be written as
		\[ N(u) + 2\int_0^t \Ng{u}^2 d\tau = N(u_0). \]
		We deduce from that expression that $N(u)$ is a decreasing function and $\int_0^t \Ng{u}^2 d\tau$ is bounded independently of $t$ by $N(u_0)$.
		Now, we multiply \eqref{pb_Cauchy1} by $u_{xx} + \frac{u^{p+1}}{p+1}$ and we integrate with respect to $x$. Then we have
		\[ \frac{d}{dt} \left( \int_{\R} -\frac{u_x^2}{2}+\frac{u^{p+2}}{(p+1)(p+2)} dx \right) - \int_{\R}\mathscr{L}_{\gamma}(u_x)u_x dx + \int_{\R}\mathscr{L}_{\gamma}(u) \left( \frac{u^{p+1}}{p+1} \right) dx = 0. \]
		Integrating with respect to time, we obtain
		\[ E(u) + \int_0^t \Ng{u_x}^2 d\tau - \int_0^t \left( \int_{\R} \mathscr{L}_{\gamma}(u) \left( \frac{u^{p+1}}{p+1} \right) dx \right)d\tau = E(u_0). \]
		From this expression, we have
		\begin{align*}
			\int_{\R} u_x^2 dx & = E(u) + \int_{\R}\frac{u^{p+2}}{(p+1)(p+2)} \\
				& \leq E(u_0) + \int_{\R}\frac{u^{p+2}}{(p+1)(p+2)} + \int_0^t \left( \int_{\R}\mathscr{L}_{\gamma}(u) \left( \frac{u^{p+1}}{p+1} \right) dx \right)d\tau \\
				& \leq E(u_0) + \frac{1}{(p+1)(p+2)} \Normp{u}{\infty}^p \Normp{u}{2}^2 + \left( \sup\limits_{0 \leq \tau \leq t} \Normp{u}{\infty}^p \right) \frac{1}{p+1} \int_0^t \left( \int_{\R}\mathscr{L}_{\gamma}(u) u dx \right)d\tau.
		\end{align*}		
		\noindent Using the inequality $\Normp{u}{\infty}^2 \leq 2 \Normp{u}{2} \Normp{u_x}{2}$ and because $\int_{\R}\abs{\mathscr{L}_{\gamma}(u) u} = \int_{\R}\mathscr{L}_{\gamma}(u) u$,
		\[ \int_{\R} u_x^2 dx \leq E(u_0) + \frac{2^{\frac{p}{2}}}{(p+1)(p+2)} \Normp{u}{2}^{2+\frac{p}{2}} \Normp{u_x}{2}^{\frac{p}{2}} + \sup\limits_{0 \leq \tau \leq t} \left( 2^{\frac{p}{2}} \Normp{u}{2}^{\frac{p}{2}} \Normp{u_x}{2}^{\frac{p}{2}} \right) \int_0^t \Ng{u}^2 d\tau. \]
		\noindent Since $\Normp{u}{2} \leq \Normp{u_0}{2}$
		\[ \int_{\R} u_x^2 dx \leq C_0 + C_1 \Normp{u_x}{2}^{\frac{p}{2}} + C_2 \sup\limits_{0 \leq \tau \leq t} \Normp{u_x}{2}^{\frac{p}{2}}. \]
		Then
		\begin{equation}\label{eq_glob0}
			\sup\limits_{0 \leq \tau \leq t} \Normp{u_x}{2}^2 - C \sup\limits_{0 \leq \tau \leq t} \Normp{u_x}{2}^{\frac{p}{2}} \leq C_0. 
		\end{equation}
		If there exists $T>0$ such that $\lim\limits_{t \rightarrow T} \Normp{u_x}{2} = +\infty$ then $\Normp{u_x}{2}^2 - C\Normp{u_x}{2}^{\frac{p}{2}} \rightarrow +\infty$ since $p < 4$ and this is impossible because of \eqref{eq_glob0}. Consequently, $\Normp{u_x}{2}$ is bounded for all $t$ and so is the $H^1$-norm.

		\vspace{0.5cm}
		\noindent\textbf{Case $p \geq 4$:}

		
		We estimate the $L^2$-norm of $u_{xx}$. We multiply \eqref{pb_Cauchy1} with $u_{xxxx}$ and we integrate with respect to $x$. Then we have
		\begin{equation}\label{egal_glob2} 
			\frac{1}{2}\frac{d}{dt} \left( \int_{\R}u_{xx}^2 dx \right) + \int_{\R}\mathscr{L}(u) u_{xxxx} dx = -\int_{\R} u^p u_x u_{xxxx} dx.
		\end{equation}
		Using two integrations by part, we have
		\[ \int_{\R}\mathscr{L}(u) u_{xxxx} dx = \int_{\R}\mathscr{L}(u_{xx}) u_{xx} dx. \]
		Let us work on the last term. Using integrations by part, we have
		\[ -\int_{\R} u^p u_x u_{xxxx} dx = -\frac{5p}{2} \int_{\R} u^{p-1}u_x u_{xx}^2 dx - p(p-1) \int_{\R} u^{p-2} u_x^3 u_{xx} dx. \]
		It follows that
		\[ -\int_{\R} u^p u_x u_{xxxx} dx  \leq \frac{5p}{2} \|u\|_{\infty}^{p-1} \| u_x \|_{\infty} \| u_{xx} \|_{L^2}^2 + p(p-1) \| u \|_{\infty}^{p-2} \| u_x \|_{\infty}^2 \int_{\R} |u_x u_{xx}|dx. \]
		But, from the Cauchy-Schwarz inequality
		\[ \int_{\R} |u_x u_{xx}|dx \leq \| u_x \|_{L^2} \| u_{xx} \|_{L^2}. \]
		Then we have
		\[ -\int_{\R} u^p u_x u_{xxxx} dx  \leq \frac{5p}{2} \|u\|_{\infty}^{p-1} \| u_x \|_{\infty} \| u_{xx} \|_{L^2}^2 + p(p-1) \| u \|_{\infty}^{p-2} \| u_x \|_{\infty}^2 \| u_x \|_{L^2} \| u_{xx} \|_{L^2}. \]
		Using the inequalty $\| u \|_{\infty}^2 \leq 2 \| u \|_{L^2} \| u_x \|_{L^2}$, we obtain
		\begin{align*}
			-\int_{\R} u^p u_x u_{xxxx} dx \leq & \left[ \frac{5p}{2} \left( 2 \| u \|_{L^2}^{\frac{3}{2}} \| u_{xx} \|_{L^2}^{\frac{1}{2}} \right)^{\frac{p-1}{2}} \| u \|_{L^2}^{\frac{1}{4}} \| u_xx \|_{L^2}^{\frac{3}{4}} \right. \\
			& \left. + p(p-1) \left( 2 \| u \|_{L^2}^{\frac{3}{2}} \| u_{xx} \|_{L^2}^{\frac{1}{2}} \right)^{\frac{p-2}{2}} \| u \|_{L^2} \| u_{xx}\|_{L^2} \right] \| u_{xx} \|_{L^2}^{2}. \\
			& =: \Omega \left( \| u \|_{L^2}, \| u_{xx}\|_{L^2} \right) \| u_{xx} \|_{L^2}^{2}.
		\end{align*}
		From \eqref{egal_glob2}, it leads to the inequality
		\[ \frac{1}{2} \frac{d}{dt} \| u_{xx}\|_{L^2} + \int_{\R} \mathscr{L}_{\gamma}(u_{xx})u_{xx} - \Omega u_{xx}^2 dx \leq 0. \]
		But 
		\begin{align*}
			\int_{\R} \mathscr{L}_{\gamma}(u_{xx})u_{xx} - \Omega u_{xx}^2 dx & = \int_{\R} \left[ \widehat{\mathscr{L}_{\gamma}(u_xx)} \overline{\widehat{u_{xx}}} - \Omega \widehat{u_{xx}} \overline{\widehat{u_{xx}}} \right]d\xi \\
			& = \int_{\R} \left( \gamma(\xi) - \Omega \right) \left| \widehat{u_{xx}} \right|^2 d\xi.
		\end{align*}
		The function $\Omega$ is increasing for its two arguments. We previously notice that $\| u(\cdot,t) \|_{L^2}$ is an decreasing function with respect to the time. Then, if $\gamma(\xi) - \Omega \vert_{t=0} \geq 0$, $\| u_{xx}(\cdot,t) \|_{L^2}$ does not increase for $t \geq 0$. Particularly, if $\gamma(\xi) \geq \Omega \left( \| u_0 \|_{L^2}, \| u_{0xx} \|_{L_2} \right) =: \theta$, the semi-norm $\| u_{xx}(\cdot,t) \|_{L^2}$ is bounded by its values at $t=0$.
	\end{proof}

	\begin{remark}
		This result is also true on the torus $\T(0,L)$ where the operator $\mathscr{L}_\gamma$ is defined by its Fourier symbol  
		\[ \widehat{\mathscr{L}_\gamma (u )} (k) := \gamma_k \hat u_k. \]
		Here $\hat u_k$ is the $k-$th Fourier coefficient of $u$ and $(\gamma_k)_{k\in \Z}$ are positive real numbers chosen such that
		\[ \int_\T  u(x) \mathscr{L}_\gamma (u) d\mu(x) = \sum_{k\in \Z} \gamma_k |\hat u_k|^2 \geq 0. \]
	\end{remark}
	
\section{Numerical results}
	\graphicspath{{picturesNB/}}
	In this part, we illustrate the theorem \ref{thm_global} numerically. 
Our purpose is first to find similar results as in \cite{BonDouKar} i.e. find a $\gamma_k$ constant such that the solution does not blow up. Then build a sequence of $\gamma_k$, still preventing the blow-up, such that $\lim\limits_{|k| \rightarrow +\infty} \gamma_k = 0$. Since dKdV is a low frequencies problem, we do not need to damp all the frequencies.

\subsection{Computation of the damping}

	In order to find the suitable damping, one may use the dichotomy. We remind that our goal is to prevent the blow-up, i.e., avoid that $\lim\limits_{t \rightarrow + \infty} \| u \|_{H^1} = +\infty$. Let us begin finding a constant damping $\mathscr{L}_{\gamma}(u) = \gamma u$ as weak as possible. We mean by weak that $\gamma$ has to be as lower as possible to prevents the blow up. Let $\gamma_a$ respectively $\gamma_e$ be the damping which prevents the explosion and which does not respectively. To initialize the dichotomy, we give a value to $\gamma$ and we determine the initial values of $\gamma_a$ and $\gamma_e$.  Then from these two initial values, we bring them closer by using dichotomy. The method is detailed in algorithm \ref{algo1} and illustrated in Figures \ref{Step1} and \ref{Step2}.

	\begin{algorithm}[H]
		\caption{$\gamma_a$ and $\gamma_e$ using dichotomy}\label{algo1}
		\begin{multicols}{2}
		\begin{algorithmic}[1]
			\REQUIRE $\gamma_0$, $\epsilon$
			\STATE Initialisation of $\gamma$ : $\gamma = \gamma_0$
			\STATE Simulation with $\gamma_k = \gamma$
			\IF{Explosion}
				\WHILE{Explosion}
					\STATE $\gamma = 2\gamma$
					\STATE Simulation with $\gamma_k = \gamma$
				\ENDWHILE
				\STATE $\gamma_a = \gamma$
				\STATE $\gamma_e = \frac{\gamma}{2}$
			\ELSE
				\WHILE{Damping}
					\STATE $\gamma = \frac{\gamma}{2}$
					\STATE Simulation with $\gamma_k = \gamma$
				\ENDWHILE	
				\STATE $\gamma_e = \gamma$
				\STATE $\gamma_a = 2\gamma$
			\ENDIF
			\WHILE{$| \gamma_a - \gamma_e | > \epsilon$}
				\STATE $\gamma = \frac{\gamma_a + \gamma_e}{2}$
				\STATE Simulation with $\gamma_k = \gamma$
				\IF{Explosion}
					\STATE $\gamma_e = \gamma$
				\ELSE
					\STATE $\gamma_a = \gamma$
				\ENDIF
			\ENDWHILE	
		\end{algorithmic}
		\end{multicols}
	\end{algorithm}
	
	\begin{minipage}{0.46\linewidth}
		\shorthandoff{:}\begin{tikzpicture}[scale=0.5]
		\draw[->] (-0.2,0)--(11.5,0) node[right] {$k$};
		\draw[->] (0,-0.2)--(0,9) node[above] {$\gamma_k$};
		\node[below left=0.025cm] at (0,0) {$0\strut$};
		\draw[-,thick] (0,2)--(10,2) node[right] {$\gamma_0$};
		\draw[-,color=red,thick] (0,4)--(10,4) node[right] {$\gamma_e$};
		\draw[-,color=blue,thick] (0,8)--(10,8) node[right] {$\gamma_a$};
		\draw[-,color=blue,thick] (0,1)--(10,1) node[right] {$\gamma_a$};
		\draw[-,color=red,thick] (0,0.5)--(10,0.5) node[right] {$\gamma_e$};
		\draw[->,thick] (2,2)--(2,4);
		\draw[->,thick] (1.75,2)--(1.75,8);
		\node[right] at (2,3) {If explosion with $\gamma_0$};
		\draw[->,thick] (1,2)--(1,1);
		\draw[->,thick] (0.75,2)--(0.75,0.5);
		\node[right] at (1,1.5) {If damping with $\gamma_0$};
		\end{tikzpicture}\shorthandon{:}
		\captionof{figure}{Initialization}
		\label{Step1}
	\end{minipage}
	\hfill
	\begin{minipage}{0.46\linewidth}
		\shorthandoff{:}\begin{tikzpicture}[scale=0.5]
		\draw[->] (-0.2,0)--(10,0) node[right] {$k$};
		\draw[->] (0,-0.2)--(0,9) node[above] {$\gamma_k$};
		\node[below left=0.025cm] at (0,0) {$0\strut$};
		\draw[-,color=blue,thick] (0,8)--(9,8) node[right] {initial $\gamma_a$};
		\draw[-,color=red,thick] (0,2)--(9,2) node[right] {initial $\gamma_e$};
		\draw[dotted,color=blue,thick] (0,5)--(9,5) node[right] {optimal $\gamma_a$};
		\draw[dotted,color=red,thick] (0,4)--(9,4) node[right] {optimal $\gamma_e$};
		\draw[->,color=blue,thick] (4.5,8)--(4.5,5);
		\draw[->,color=red,thick] (4.5,2)--(4.5,4);
		\draw[<->,thick] (6.5,5)--(6.5,4); 
		\node[right] at (6.5,4.5) {$\epsilon$};
		\end{tikzpicture}\shorthandon{:}	
		
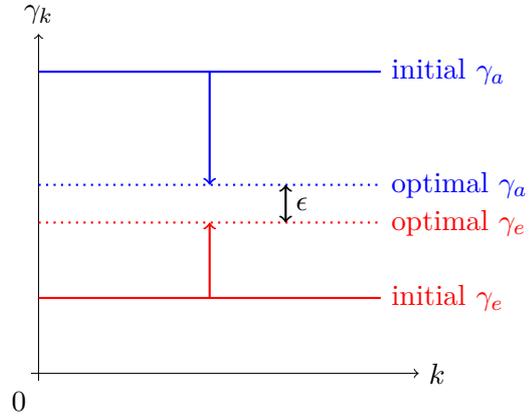
\captionof{figure}{Dichotomy}
		\label{Step2}
	\end{minipage}

	
	W extend the method to frequencies bands in order to build sequencies $\gamma_k$ decreasing with respect to $|k|$ and tending to 0 when $|k|$ tends to the infinity. So we begin by defining the frequencies bands ($N_1 < N_2 < \ldots$) and we proceed as previously but only on the frequencies $|k| \geq N_i$. The method is described in algorithm \ref{algo3} and illustrated in Figure \ref{Step3} and \ref{Step4}.
	
	\begin{algorithm}[H]
		\caption{$\gamma_a$ and $\gamma_e$ on the band using dichotomy}\label{algo3}
		\begin{multicols}{2}
		\begin{algorithmic}[1]
			\REQUIRE $\gamma_a$, $N$ and $Nb\_iter$
			\STATE Initialisation $\gamma = \gamma_a$
			\STATE $\gamma_{|k|>N} = 0$
			\STATE Simulation with $\gamma$
			\IF{Damping}
				\RETURN $\gamma_a = \gamma$
			\ELSE
				\STATE $\gamma = \gamma_a$
				\WHILE{Damping}
					\STATE $\gamma_{|k|>N} = \frac{\gamma_{|k|>N}}{2}$
					\STATE Simulation with $\gamma$
				\ENDWHILE
				\STATE $\gamma_e = \gamma$
				\STATE $\gamma_{a,|k|>N} = 2\gamma_{|k|>N}$
			\ENDIF
			\FOR{$i=1$ to $Nb\_iter$}
				\STATE $\gamma_{|k|>N} = \frac{\gamma_{a,|k|>N} + \gamma_{e,|k|>N}}{2}$
				\STATE Simulation with $\gamma$
				\IF{Explosion}
					\STATE $\gamma_{e,|k|>N} = \gamma_{|k|>N}$
				\ELSE
					\STATE $\gamma_{a,|k|>N} = \gamma_{|k|>N}$	
				\ENDIF
			\ENDFOR
		\end{algorithmic}
		\end{multicols}
	\end{algorithm}
	

	\begin{minipage}{0.46\linewidth}
		\shorthandoff{:}\begin{tikzpicture}[scale=0.5]
		\draw[->] (-0.2,0)--(10,0) node[right] {$k$};
		\draw[->] (0,-0.2)--(0,9) node[above] {$\gamma_k$};
		\node[below left=0.025cm] at (0,0) {$0\strut$};
		\draw[-] (3,-0.1)--(3,0.1);
		\node[below] at (3,0){$N_1$};
		\draw[-,thick,color=blue] (0,8)--(9,8) node[right,text width=2cm] {$\gamma_a$ obtained at step 2};;
		\draw[-,thick,color=blue] (3,6)--(9,6) node[right] {initial $\gamma_a$};
		\draw[-,thick,color=red] (3,2)--(9,2) node[right] {initial $\gamma_e$};
		\draw[dotted] (3,0)--(3,8);
		\draw[dotted,thick,color=blue] (3,5)--(9,5) node[right] {optimal $\gamma_a$};
		\draw[dotted,thick,color=red] (3,4)--(9,4) node[right] {optimal $\gamma_e$};
		\draw[->,thick,color=blue] (6,6)--(6,5);
		\draw[->,thick,color=red] (6,2)--(6,4);
		\end{tikzpicture}\shorthandon{:}
		\captionof{figure}{Initialization}
		\label{Step3}
	\end{minipage}
	\hfill
	\begin{minipage}{0.46\linewidth}
		\shorthandoff{:}\begin{tikzpicture}[scale=0.5]
		\draw[->] (-0.2,0)--(10,0) node[right] {$k$};
		\draw[->] (0,-0.2)--(0,9) node[above] {$\gamma_k$};
		\node[below left=0.025cm] at (0,0) {$0\strut$};
		\draw[-] (3,-0.1)--(3,0.1);
		\node[below] at (3,0){$N_1$};
		\draw[dotted] (3,0)--(3,8);
		\draw[-] (4.5,-0.1)--(4.5,0.1);
		\node[below] at (4.5,0){$N_2$};
		\draw[dotted] (4.5,0)--(4.5,6);
		\draw[-,thick,color=blue] (0,8)--(3,8);
		\draw[-,thick,color=blue] (3,6)--(9,6) node[right,text width=2cm] {$\gamma_a$ obtained at step 3};
		\draw[-,thick,color=blue] (4.5,4.5)--(9,4.5) node[right] {initial $\gamma_a$};
		\draw[-,thick,color=red] (4.5,1.75)--(9,1.75) node[right] {initial $\gamma_e$};
		\draw[dotted,thick,color=blue] (4.5,3.5)--(9,3.5) node[right] {optimal $\gamma_a$};
		\draw[dotted,thick,color=red] (4.5,2.75)--(9,2.75) node[right] {optimal $\gamma_e$};
		\draw[->,thick,color=blue] (7.25,4.5)--(7.25,3.5);
		\draw[->,thick,color=red] (7.25,1.75)--(7.25,2.75);
		\end{tikzpicture}\shorthandon{:}
		
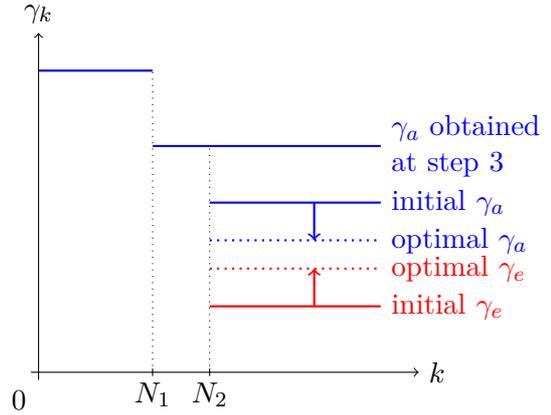
\captionof{figure}{Find the damping}
		\label{Step4}
	\end{minipage}

\subsection{Numerical scheme}	

	Numerous schemes were introduced in [CheSad]. Here we chose a Sanz-Serna scheme for the discretisation in time. In space, we use the FFT. Actually, the scheme is written, for all $k$, as 
	\begin{multline*}
		\left( 1+\frac{\Delta t}{2}(ik-ik^3+\gamma_k) \right) \widehat{u^{(n+1)}}(k) = \left( 1-\frac{\Delta t}{2}(ik-ik^3+\gamma_k) \right) \widehat{u^{(n)}}(k) \\
		- \frac{ik \Delta t}{p+1} \mathscr{F} \left[ \left( \frac{u^{(n+1)}+u^{(n)}}{2} \right)^{p+1} \right](k). 
	\end{multline*}
	We find $\widehat{u^{(n+1)}}_k$ with a fixed-point method. In order to have a good look of the blow-up, we also use an adaptative time step.

\subsection{Simulations}
	
	We consider the domain $[-L,L]$ where $L=50$. We take as initial datum a disturbed soliton, written as 
	\[ u_0(x) = 1.01 \times \left( \frac{(p+1)(p+2)(c-1)}{2} \right)^{\frac{1}{p}} \cosh^{-\frac{2}{p}} \left( \pm \sqrt{\frac{p(c-1)}{4}} (x-ct-d) \right), \]
	where $p=5$, $c=1.5$ and $d=0.2L$. We discretise the space in $2^{11}$ points. The Figure \ref{no_amort} shows the solution whithout damping, i.e., $\gamma_k = 0, \ \forall k$. We observe that the $L^2$-norm of $u_x$ increases strongly and the solution tends to a wavefront (as in \cite{BonDouKar}).
	
	\begin{figure}[H]
		\begin{changemargin}{-1.4cm}{0cm}
		\centering
		\includegraphics[scale=0.5]{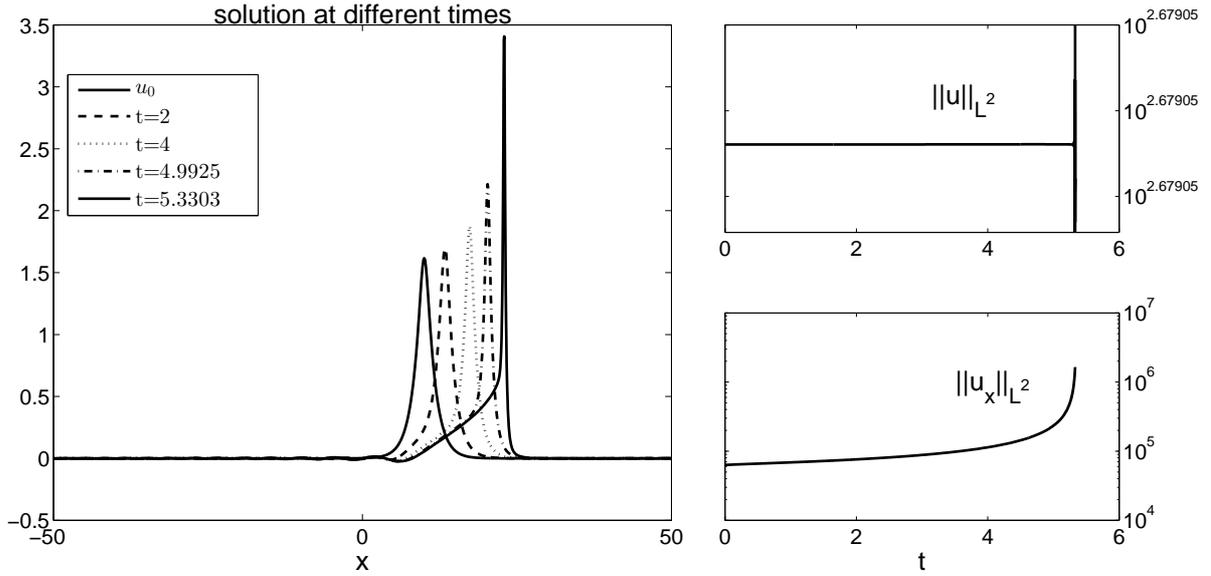} 
		\end{changemargin}	
		\caption{At left, solution at different times $t=$ 0, 2, 4, 4.9925 and 5.3303. At right, $H^1$-norm and $L^2$-norm evolution without damping and a perturbed soliton as initial datum. Here $p=5$.}
		\label{no_amort}
	\end{figure}
	
	Using the methods introduced previously, we first find two optimal constant dampings $\gamma_e = 0.0025$ and $\gamma_a = 0.0027$. As we can see in Figure \ref{amort_explo}, $\gamma_e$ does not prevent the blow up. In the opposite in Figure \ref{amort} $\gamma_a$ does. And we also notice that the two dampings are quite close.
	
	\begin{figure}[H]
		\begin{changemargin}{-1.4cm}{0cm}
		\centering
		\includegraphics[scale=0.5]{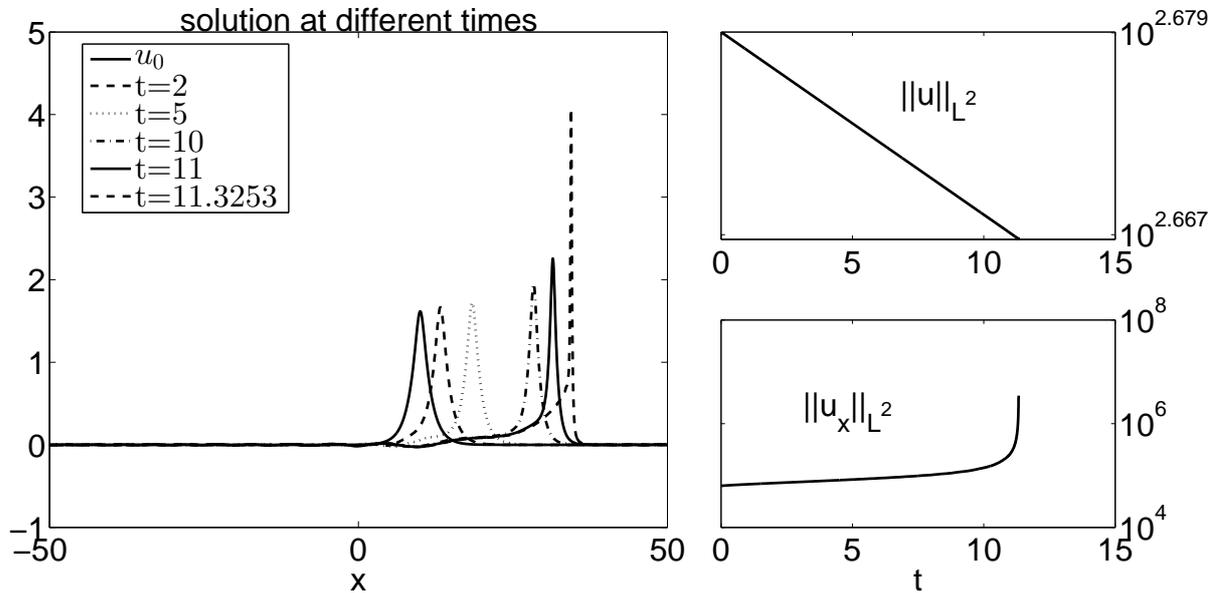} 
		\end{changemargin}	
		\caption{At left, solution at different times $t=$ 0, 2, 5, 10, 11 and 11.3253. At right, $H^1$-norm and $L^2$-norm evolution with $\gamma_k=0.0025$ and a perturbed soliton as initial datum. Here $p=5$.}
		\label{amort_explo}
	\end{figure}

	\begin{figure}[H]
		\begin{changemargin}{-1.4cm}{0cm}
		\centering
		\includegraphics[scale=0.5]{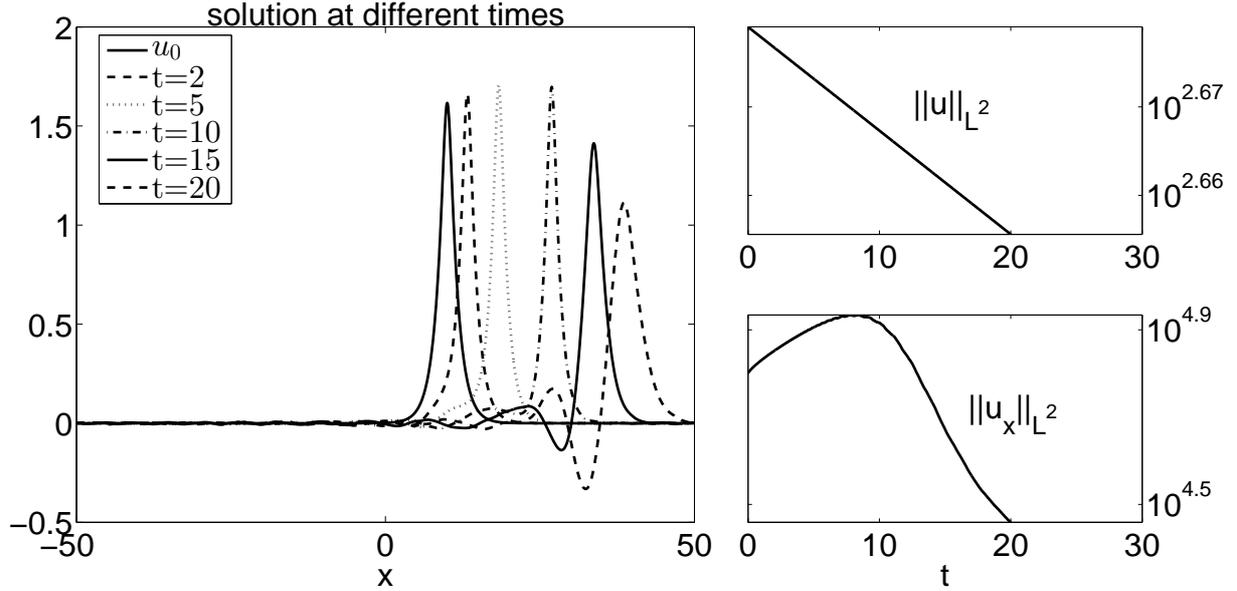} 
		\end{changemargin}	
		\caption{At left, solution at different times $t=$ 0, 2, 5, 10, 15 and 20. At right, $H^1$-norm and $L^2$-norm evolution with $\gamma_k=0.0027$ and a perturbed soliton as initial datum. Here $p=5$.}
		\label{amort}
	\end{figure}
	Considering more general sequences, particularly such that $\lim\limits_{|k|\rightarrow +\infty} \gamma_k = 0$. Using algorithm \ref{algo3}, Figure \ref{soliton_seq} shows that the sequence ($\gamma_a$) as a frontier between the dampings which prevent the blow up and the other which do not. To illustrate this, we take two dampings written as gaussians. The first (denoted by $\gamma_1$) is build to be always above the sequence $\gamma_a$ and the second (denoted by $\gamma_2$) to be always below. In Figures \ref{soliton_seq_amort} and \ref{soliton_seq_explo} we observe the damping $\gamma=\gamma_1$ prevents the blow up. But if we take $\gamma = \gamma_2$, the solution blows-up.
	
	\begin{figure}[H]
		\begin{changemargin}{-1.4cm}{0cm}
		\centering
		\includegraphics[scale=0.5]{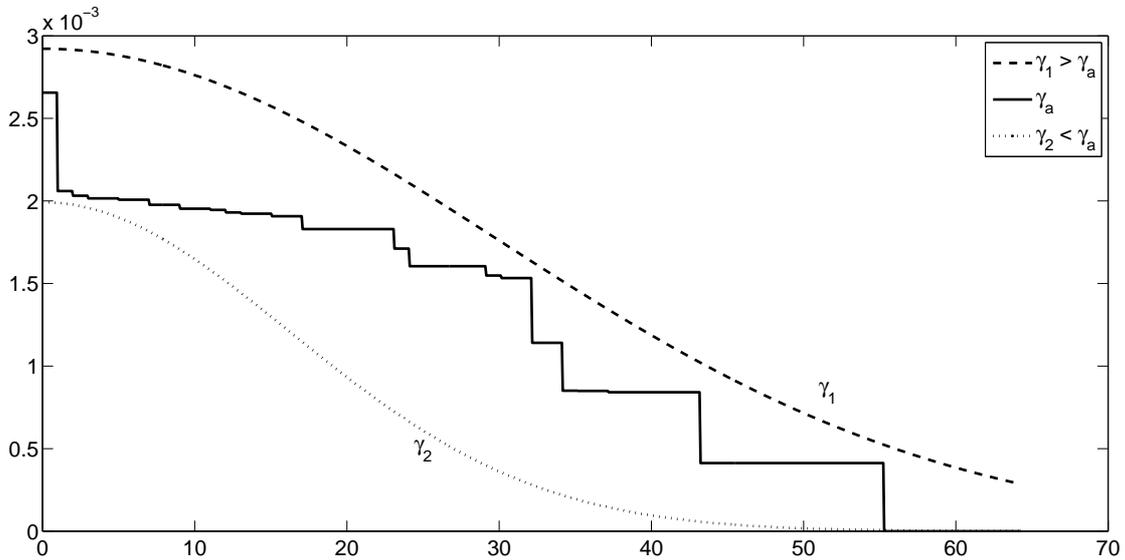} 
		\end{changemargin}	
		\caption{Example of a build damping. Here the initial datum is the perturbed soliton. Here $p=5$.}
		\label{soliton_seq}
	\end{figure}
	
	\begin{figure}[H]
		\begin{changemargin}{-1.4cm}{0cm}
		\centering
		\includegraphics[scale=0.5]{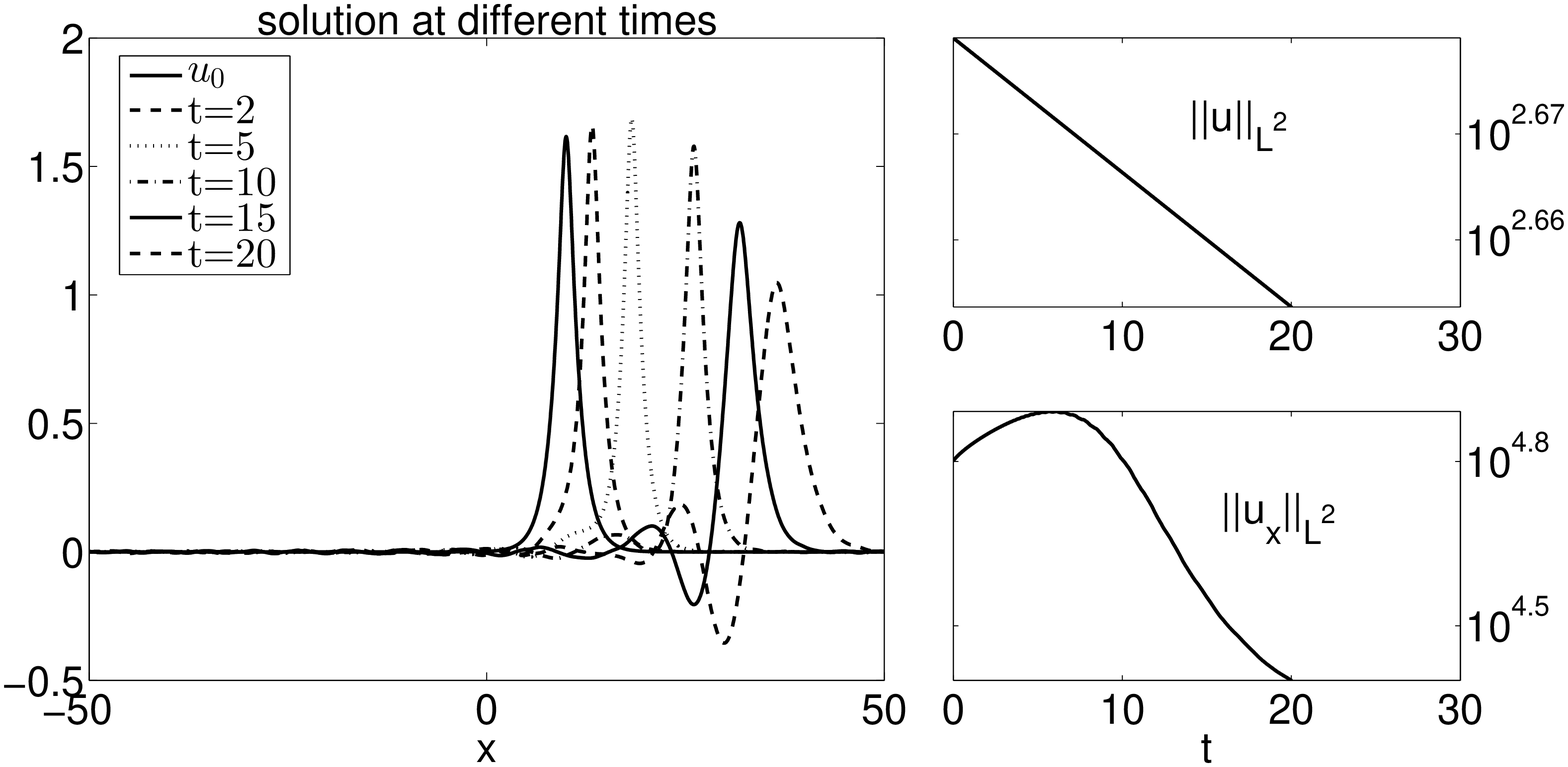} 
		\end{changemargin}	
		\caption{At left, solution at different times $t=$ 0, 2, 5, 10, 15 and 20. At right, $H^1$-norm and $L^2$-norm evolution with $\gamma = \gamma_1$ and a perturbed soliton as initial datum. Here $p=5$.}
		\label{soliton_seq_amort}
	\end{figure}
	
	\begin{figure}[H]
		\begin{changemargin}{-1.4cm}{0cm}
		\centering
		\includegraphics[scale=0.5]{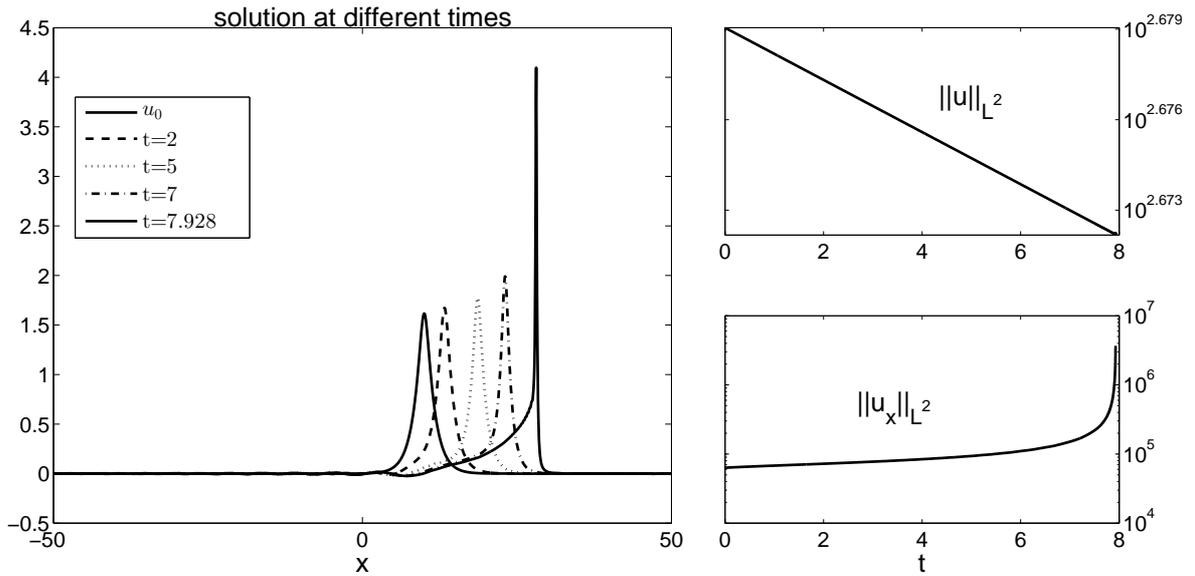} 
		\end{changemargin}	
		\caption{At left, solution at different times $t=$ 0, 2, 5, 7 and 7.928. At right,  $H^1$-norm and $L^2$-norm evolution with $\gamma = \gamma_2$ and a perturbed soliton as initial datum. Here $p=5$.}
		\label{soliton_seq_explo}
	\end{figure}	
	
\section*{Conclusion}
	We studied the behavior of the damped generalized KdV equation. If $p<4$, the solution does not blow-up whereas if $p \geq 4$, it can. To prevent the blow-up, the term $\gamma$ defining the damping has to be large enough. In particular, we build a sequence of $\gamma$ which vanishes for high frequencies.
This frequential approach for the damping seems useful for low frequencies problem.

\vspace{0.5cm}

\noindent\textbf{Acknoledgments.} I would like to thank my thesis supervisors, Jean-Paul Chehab and Youcef Mammeri, for their help and comments.

\bibliographystyle{alpha}
\bibliography{biblio}	
\nocite{*}

\end{document}